\newcommand{\Z}{\ensuremath{\mathbb{Z}}}
\newtheorem{thm}{Theorem}[section]
\newtheorem{prop}[thm]{Proposition}
\newtheorem{lemma}[thm]{Lemma}
\newtheorem{rem}[thm]{Remark}
\title{Lonesum decomposable matrices}
\date{\empty}
\author{Ken Kamano}
\begin{document}

\maketitle

\begin{abstract}

A lonesum matrix is a $(0,1)$-matrix that
is uniquely determined by 
its row and column sum vectors.
In this paper, we introduce
lonesum decomposable matrices and 
study their properties.
We provide a necessary and sufficient condition
for a matrix $A$ to be lonesum decomposable,
and give 
a generating function for
the number $D_k(m,n)$ of $m\times n$ lonesum decomposable matrices of order $k$.
Moreover, by using this generating function
we prove some congruences 
for $D_k(m,n)$ modulo a prime.

\vspace{10pt}

\noindent MSC2010: Primary 05A15, Secondary 11B68; 15B36

\noindent Keywords: Lonesum matrices; poly-Bernoulli numbers
\end{abstract}

\section{Introduction}

A \textit{$(0,1)$-matrix} (resp.~\textit{vector}) 
is a matrix (resp.~vector) in which each entry is 
zero or one.
A $(0,1)$-matrix $A$  is called 
a \textit{lonesum matrix}
if $A$ is uniquely determined by
its row and column sum vectors.
For example, 
a $(0,1)$-matrix with
a row sum vector ${}^t(3,1)$
and a column sum vector $(1,2,1)$
is uniquely determined as the following:
\[  \begin{pmatrix}
1& 1 & 1  \\
0&1 & 0
\end{pmatrix}. \]
Hence, the matrix
$ \begin{pmatrix}
1& 1 & 1  \\
0&1 & 0
\end{pmatrix}$ is a lonesum matrix.
Because $  \begin{pmatrix}
1& 0 & 1  \\
0&1 & 0
\end{pmatrix}$
and $  \begin{pmatrix}
0& 1 & 1  \\
1&0 & 0
\end{pmatrix}$
have the same row and column sum vectors,
they 
are not lonesum matrices.
We denote by $L(m,n)$ the
number of $m\times n $ lonesum matrices.
For simplicity, we set $L(m,0) = L(0,m)=1$ for
any non-negative integer $m$.
It is known that lonesum matrices are related
to certain combinatorial objects.
For example, the number $L(m,n)$ is equal to
the number of acyclic orientations
of the complete bipartite graph $K_{m,n}$
(\cite[Theorem 2.1]{CG}).

An $m \times n$ $(0,1)$-matrix $A=(a_{ij})$ is called a {\it Ferrers matrix} if 
$A$ satisfies the condition
\begin{equation*}\label{eq:Ferrers}
\begin{cases}
 a_{ij}=0 \Rightarrow a_{kj}=0  &(k\ge i),\\ 
 a_{ij}=0 \Rightarrow a_{il}=0  &(l\ge j).
\end{cases}
\end{equation*}
This condition means that all 1 entries of $A$ are placed 
to the upper left of $A$. For example,
the matrix $\begin{pmatrix}
1&1 &1 \\1&0&0
\end{pmatrix}$ is a Ferrers matrix.
Ryser  \cite{R} investigated matrices 
that have fixed row and column sum vectors.
In our setting, his result can be written as follows:
\begin{prop}\label{prop:Ryser}
Let $A$ be a $(0,1)$-matrix.
Then, the following conditions are equivalent:
\begin{itemize}
\item[(i)] $A$ is a lonesum matrix.
\item[(ii)] $A$ does not contain
$\begin{pmatrix} 1 & 0 \\ 0 &1\end{pmatrix}$ or 
$\begin{pmatrix} 0 & 1 \\ 1 &0\end{pmatrix}$
as a submatrix.
\item[(iii)] $A$ is obtained from a Ferrers matrix 
by permutations of rows and columns.
\end{itemize}
\end{prop}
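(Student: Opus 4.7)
The plan is to prove the equivalence by the cycle (i) $\Rightarrow$ (ii) $\Rightarrow$ (iii) $\Rightarrow$ (i).

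For (i) $\Rightarrow$ (ii) I argue by contrapositive. If $A$ contains one of the two forbidden $2\times 2$ submatrices at rows $i<k$ and columns $j<l$, I swap its two diagonals: this replaces the offending $\begin{pmatrix}1&0\\0&1\end{pmatrix}$ by $\begin{pmatrix}0&1\\1&0\end{pmatrix}$ (or vice versa) and leaves every row and column sum of $A$ unchanged. The resulting matrix is distinct from $A$ but shares its sum vectors, so $A$ is not lonesum.

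For (ii) $\Rightarrow$ (iii) the key observation is a containment lemma: condition (ii) forces any two rows of $A$ to be comparable under the coordinatewise order on $(0,1)$-vectors, because otherwise one finds columns $j,l$ witnessing a forbidden submatrix. The same holds for columns. I first permute the rows so that the row supports form a descending chain; after this operation the $1$s in each column occupy a top segment. The permuted matrix still satisfies (ii), so applying the analogous statement to its columns I sort the columns by decreasing number of $1$s. A short check then confirms that the resulting matrix has both its $1$s in each column at the top and its $1$s in each row at the left, i.e.~it is a Ferrers matrix.

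For (iii) $\Rightarrow$ (i) I reduce to the Ferrers case: writing $A = PFQ$ with permutation matrices $P,Q$ and Ferrers $F$, any matrix $B$ sharing its row and column sums with $A$ gives, via $P^{-1}BQ^{-1}$, a matrix with the same sums as $F$. Thus it suffices to show that a Ferrers matrix $F$ with decreasing row sums $r_1\ge\cdots\ge r_m$ is lonesum. The first column sum is $c_1 = |\{i:r_i\ge 1\}|$, which forces any matrix $B$ with the same sums to have $b_{i,1}=1$ precisely when $r_i\ge 1$, matching the first column of $F$. Deleting this column reduces to a smaller Ferrers matrix with row sums $\max(r_i-1,0)$, and induction on the number of columns gives $B=F$.

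The only non-routine step is the containment lemma in (ii) $\Rightarrow$ (iii), together with the verification that the two sorting operations are compatible: once the rows have been sorted so that each column is top-aligned, the subsequent column sort must still produce a Ferrers shape, which hinges on the observation that permuting columns preserves the top-aligned structure inherited from the row sort. Everything else is bookkeeping.
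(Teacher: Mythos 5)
Your proof is correct, but note that the paper does not actually prove this proposition: it is stated as a rephrasing of Ryser's 1957 theorem and cited to [R] without argument. So there is nothing in the paper to compare against step by step; what you have written is a self-contained elementary proof of the cited result. All three implications check out. The interchange argument for (i) $\Rightarrow$ (ii) is the standard one, and the swapped matrix is genuinely different from $A$ while preserving both sum vectors. For (ii) $\Rightarrow$ (iii), your containment lemma is the right key fact: two incomparable rows produce columns $j,l$ witnessing one of the two forbidden patterns (which pattern depends on whether $j<l$ or $l<j$, but both are excluded), so the rows form a chain, and likewise the columns; sorting rows into a descending chain top-aligns every column, this property survives any column permutation, and the subsequent column sort left-aligns every row, which is exactly the Ferrers condition. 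For (iii) $\Rightarrow$ (i), the reduction via $P^{-1}BQ^{-1}$ is valid because permutations only permute the sum vectors, and the column-peeling induction works since the rows of zero row sum are forced to be zero and the remaining $c_1$ rows are forced to have a $1$ in column one, after which deleting that column leaves a smaller Ferrers matrix with row sums $\max(r_i-1,0)$. The one point worth making explicit if you write this up is that condition (ii) is invariant under row and column permutations (the set of two forbidden patterns is closed under the induced swaps), which you use implicitly when you re-sort after permuting.
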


For an integer $k$, Kaneko \cite{K} introduced
poly-Bernoulli numbers $B_n^{(k)}$ of index $k$ 
as 
\begin{equation}
 \frac{{\rm Li}_k(1-e^{-t})}{1- e^{-t}} = \sum_{n=0}^{\infty} B_n^{(k)}\frac{t^n}{n!}, 
\end{equation}
where ${\rm Li}_k(z)$ denotes the $k$-th polylogarithm, defined by
$ {\rm Li}_k(z) := \sum_{n=1}^{\infty} z^n/n^k$.
Brewbaker \cite{B} proved that
the numbers $L(m,n)$ are equal to the poly-Bernoulli numbers
of negative indices:
\begin{equation}\label{eq:Brewbaker}
 L(m,n) = B^{(-m)}_n \ \ \ (m,n\ge 0).
\end{equation} 
The generating function of 
poly-Bernoulli numbers of negative indices 
has been given by Kaneko \cite{K}, 
hence the numbers of lonesum matrices
have the following generating function:
\begin{align} \label{eq:genfun_L}
\sum_{m=0}^{\infty}
\sum_{n=0}^{\infty}
L(m,n) \dfrac{x^m}{m!} \dfrac{y^n}{n!}
= \dfrac{e^{x+y}}{e^x+e^y-e^{x+y}}.
\end{align}
The present author, Ohno, and Yamamoto \cite{KOY}
introduced ``weighted'' lonesum matrices
and a simple proof of 
\eqref{eq:genfun_L} was given (see 
\cite[Proof of Theorem 1]{KOY}).

For $m\times n$ matrices $A$ and $B$, 
we write $A\sim B$ if $A$ is obtained from $B$
by row or column exchanges.
We call a $(0,1)$-matrix $A$ is \textit{lonesum decomposable} if $A$ satisfies the condition
\[ A \sim \begin{pmatrix}
 L_1 &    &  & O \\
 &  L_2   &  &  \\
 &    & \ddots  &  \\
  O   &    & 	&L_{k}
\end{pmatrix},\]
where $L_i$ ($1\le i \le k$) are lonesum matrices.
A lonesum matrix is clearly
lonesum decomposable.
Since a lonesum matrix can be obtained from 
a Ferrers matrix,
a lonesum decomposable matrix $A$ can be
transformed as
\begin{align}\label{eq:lonesumdecom}
  A \sim  \begin{pmatrix}
F_1 &    &  &O &\\
& F_2   &  &  &\\
&    &\ddots  &  & \\	
O&    & &	F_{k} & \\
&    & &	 & O 
\end{pmatrix},
\end{align}
where $F_i$ ($1\le i \le k$) are Ferrers matrices with no zero rows or zero columns.
We call the right-hand side of \eqref{eq:lonesumdecom} the
\textit{decomposition matrix} of $A$ and $k$ the \textit{decomposition order} of $A$.

\begin{prop}
Let $A$ be a lonesum decomposable matrix.
Then the decomposition matrix of $A$ is
uniquely determined up to the order of $F_i$ 
($1\le i \le k$).
In particular, the decomposition order of $A$ 
is uniquely determined.
\end{prop}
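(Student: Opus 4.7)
My plan is to reinterpret the decomposition \eqref{eq:lonesumdecom} graph-theoretically, using the bipartite graph $G(A)$ whose two vertex classes are the row and column indices of the $(0,1)$-matrix $A=(a_{ij})$ and which has an edge between row $i$ and column $j$ precisely when $a_{ij}=1$. Row and column exchanges merely relabel the vertices of $G(A)$, so the isomorphism class of $G(A)$ (in particular, its partition into connected components) is an invariant of $A$ under $\sim$.

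Next I would analyze what the block form on the right-hand side of \eqref{eq:lonesumdecom} says about $G(A)$. Because the off-diagonal blocks are zero, no edge of $G(A)$ runs between vertices belonging to distinct $F_i$'s, nor between any $F_i$ and the trailing zero rows/columns. Conversely, I need to show that each single $F_i$ contributes \emph{one} connected component, and this is where the Ferrers hypothesis is essential: since $F_i$ is a Ferrers matrix with no zero row and no zero column, its first row and first column are nonzero, which forces the $(1,1)$-entry to equal $1$; by the Ferrers conditions, every nonzero column of $F_i$ then has a $1$ in row $1$, and every nonzero row has a $1$ in column $1$, so all row/column vertices of $F_i$ lie in the component of row $1$. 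The trailing zero rows and columns contribute isolated vertices.

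Combining these two observations, the blocks $F_1,\ldots,F_k$ are in canonical bijection with the connected components of $G(A)$ that contain at least one edge. Hence $k$, together with the multiset of vertex sets $\{(R_i,C_i)\}_{i=1}^{k}$ underlying the blocks, is determined by $A$. Finally, given the submatrix of $A$ indexed by $(R_i,C_i)$, Proposition \ref{prop:Ryser} identifies its Ferrers form $F_i$ uniquely (since a Ferrers matrix with no zero row or column is uniquely determined by the weakly decreasing sorting of its row sum vector, which is readable from the submatrix). This yields uniqueness of the multiset $\{F_1,\ldots,F_k\}$, and in particular uniqueness of $k$.

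The main obstacle I anticipate is the connectedness claim for each $F_i$; once that is established, the rest is bookkeeping with connected components and with the already-known uniqueness of Ferrers forms of lonesum matrices. A minor subtlety to address carefully is separating the ``honest'' components of $G(A)$ (those carrying edges, which must correspond to some $F_i$) from isolated vertices (which must belong to the trailing zero rows/columns), so that no component is accidentally split across two blocks or merged with the padding.
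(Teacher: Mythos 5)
Your argument is correct, and it takes a genuinely different route from the paper's. The paper's proof is a two‑line argument built on Proposition \ref{prop:Ryser}: it observes that two $1$-entries $a_{ij}$ and $a_{i'j'}$ of a lonesum decomposable matrix lie in the same Ferrers block if and only if they do \emph{not} form a submatrix $\begin{pmatrix}1&0\\0&1\end{pmatrix}$ or $\begin{pmatrix}0&1\\1&0\end{pmatrix}$ --- within a block this is exactly Ryser's forbidden-submatrix characterization of lonesum matrices, while for $1$-entries in distinct blocks the forbidden pattern always appears because the off-diagonal blocks vanish. Since this pairwise criterion is intrinsic to $A$ (invariant under row and column exchanges), the partition of the $1$-entries into blocks, and hence each block's Ferrers form, is determined by $A$. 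You replace this local $2\times 2$ test by global connectivity in the bipartite graph $G(A)$; the one extra ingredient your route requires is the connectedness of $G(F)$ for a Ferrers matrix $F$ with no zero rows or columns, which you establish correctly via the observation that $a_{11}=1$ and that every nonzero row (resp.\ column) meets column $1$ (resp.\ row $1$). What your version buys is that the ``same block'' relation is manifestly an equivalence relation (it is graph connectivity) and that the isolated-vertex padding is handled explicitly; what the paper's version buys is brevity and a closer fit with the forbidden-submatrix theme that reappears in Theorem \ref{thm:avoidthm}. Both arguments conclude identically, by invoking the uniqueness of the Ferrers form of a lonesum matrix (determined by its sorted row sum vector).
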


\begin{proof}
For a lonesum decomposable matrix $A=(a_{ij})$, 
it follows from Proposition \ref{prop:Ryser} that 
two elements $a_{ij}=1$  and $a_{i' j'}=1$ belong to the same Ferrers block
if and only if $a_{ij}$  and $a_{i' j'}$ do not form a submatrix 
$\begin{pmatrix}
1&0\\ 0 &1
\end{pmatrix}$ or 
$\begin{pmatrix}
0&1\\ 1 &0
\end{pmatrix}$.
Because the type of Ferrers matrix is uniquely determined,
a decomposition matrix of $A$ is also
uniquely determined up to the order of its Ferrers blocks.
\end{proof}

The outline of this paper is as follows.
In Section
\ref{sec:Lonesum decomposable matrices},
we show that a $(0,1)$-matrix $A$ is lonesum decomposable
if and only if $A$ does not contain certain matrices 
as submatrices.
In Section \ref{sec:Generating function of lonesum decomposable matrices},
we give a generating function for the number of 
lonesum decomposable matrices.
In Section \ref{sec:Congruences for $D_k(m,n)$},
we derive some congruences for the numbers of 
lonesum decomposable matrices of order $k$
by using the generating function given
in Section \ref{sec:Generating function of lonesum decomposable matrices}.

\section{Lonesum decomposable matrices}
\label{sec:Lonesum decomposable matrices}
Let us define a $2\times 3$ matrix $U$ as
\[ U:=
\begin{pmatrix}
1 & 1 & 0 \\
1 & 0 & 1
\end{pmatrix}.\]
It can be easily checked that $U$ is not lonesum decomposable.
Let $\mathcal{N}$ be a set of all matrices 
obtained from  $U$ or ${}^tU$
by permutations
of rows and columns.
Namely, the elements of $\mathcal{N}$ are the following
twelve matrices:
\begin{center}
$\begin{pmatrix}
1 & 1 & 0 \\
1 & 0 & 1
\end{pmatrix}$,\ \ 
$\begin{pmatrix}
1 & 0 & 1 \\
1 & 1 & 0
\end{pmatrix}$,\ \ 
$\begin{pmatrix}
1 & 1 & 0 \\
0 & 1 & 1
\end{pmatrix}$,\ \ 
$\begin{pmatrix}
0 & 1 & 1 \\
1 & 1 & 0
\end{pmatrix}$,\ \ 
$\begin{pmatrix}
1 & 0 & 1 \\
0 & 1 & 1
\end{pmatrix}$,\ \ 
$\begin{pmatrix}
0 & 1 & 1 \\
1 & 0 & 1
\end{pmatrix}$,\ \ 

$\begin{pmatrix}
1 & 1  \\
1 & 0 \\
0 & 1
\end{pmatrix}$,\ \ 
$\begin{pmatrix}
1 & 1  \\
0 & 1 \\
1 & 0
\end{pmatrix}$,\ \ 
$\begin{pmatrix}
1 & 0  \\
1 & 1 \\
0 & 1
\end{pmatrix}$,\ \ 
$\begin{pmatrix}
0 & 1  \\
1 & 1 \\
1 & 0
\end{pmatrix}$,\ \ 
$\begin{pmatrix}
1 & 0  \\
0 & 1 \\
1 & 1
\end{pmatrix}$,\ \ 
$\begin{pmatrix}
0 & 1  \\
1 & 0 \\
1 & 1
\end{pmatrix}$.
\end{center}

The following is the first main result of this paper.
\begin{thm}\label{thm:avoidthm}
Let $A$ be a $(0,1)$-matrix.
Then, the following two conditions are equivalent.
\begin{enumerate}
\item $A$ is lonesum decomposable.
\item $A$ does not contain an element of $\mathcal{N}$
as a submatrix.
\end{enumerate}
\end{thm}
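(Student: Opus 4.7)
My plan is to prove the two directions of the equivalence separately, with the bulk of the work in (ii) $\Rightarrow$ (i).

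For (i) $\Rightarrow$ (ii), I would first verify directly that the matrix $U$ is not lonesum decomposable. A $2 \times 3$ candidate block decomposition into Ferrers blocks with no zero rows or columns admits only a short list of size profiles (a single $2 \times 3$ block, or two blocks of sizes $1 \times 1$ and $1 \times 2$ with perhaps a trailing zero column), and a quick enumeration shows that either the $1$-count does not match the four $1$s of $U$ or the block fails to be lonesum. Because every element of $\mathcal{N}$ is a row-column permutation of $U$ or ${}^{t}U$, and lonesum decomposability is invariant under such permutations, no element of $\mathcal{N}$ is lonesum decomposable. I would then observe that lonesum decomposability is hereditary under taking submatrices: if $A \sim \mathrm{diag}(L_1, \ldots, L_k)$ with each $L_i$ lonesum, then any submatrix of $A$ is, after row/column permutations, of the form $\mathrm{diag}(L_1', \ldots, L_k')$ where each $L_i'$ is a submatrix of $L_i$, and submatrices of lonesum matrices are lonesum by Proposition \ref{prop:Ryser}(ii). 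Contrapositively, if $A$ contained some $N \in \mathcal{N}$ as a submatrix, then $A$ itself could not be lonesum decomposable.

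For (ii) $\Rightarrow$ (i), I would introduce the bipartite graph $G_A$ whose vertices are the non-zero rows and non-zero columns of $A$, with an edge between row $i$ and column $j$ precisely when $a_{ij} = 1$. Let $C_1, \ldots, C_k$ be its connected components, and let $R_\ell, S_\ell$ denote the row and column vertex sets of $C_\ell$. After permuting rows and columns, $A$ takes block-diagonal form with blocks $A[R_\ell, S_\ell]$ together with a trailing zero block coming from any zero rows or columns of $A$; by construction each block $A[R_\ell, S_\ell]$ has no zero rows or columns. It then remains to prove that each such block is in fact a lonesum matrix under the avoidance hypothesis.

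I would establish this by contrapositive, invoking Proposition \ref{prop:Ryser}(ii). If some block contains $\bigl(\begin{smallmatrix} 1 & 0 \\ 0 & 1 \end{smallmatrix}\bigr)$ (the $\bigl(\begin{smallmatrix} 0 & 1 \\ 1 & 0 \end{smallmatrix}\bigr)$ case is symmetric), there are rows $p, q \in R_\ell$ and columns $s, t \in S_\ell$ witnessing this pattern. Since $p$ and $q$ belong to the same component of $G_A$, the graph distance $d := d(p,q)$ is well-defined and even, and I would split into three cases. If $d = 2$, any common column neighbor $c$ of $p, q$ combines with $s, t$ to give a $2 \times 3$ submatrix of the form $\bigl(\begin{smallmatrix} 1 & 0 & 1 \\ 0 & 1 & 1 \end{smallmatrix}\bigr) \in \mathcal{N}$. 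If $d = 4$, a shortest path $p - c_1 - r - c_2 - q$ yields the $3 \times 2$ submatrix $\bigl(\begin{smallmatrix} 1 & 0 \\ 0 & 1 \\ 1 & 1 \end{smallmatrix}\bigr) \in \mathcal{N}$ on rows $p, q, r$ and columns $c_1, c_2$, where $a_{p c_2} = a_{q c_1} = 0$ are forced by the absence of length-$2$ shortcuts. If $d \ge 6$, the first two intermediate rows $r_1, r_2$ and first three intermediate columns $c_1, c_2, c_3$ of a shortest path yield the submatrix $\bigl(\begin{smallmatrix} 1 & 1 & 0 \\ 0 & 1 & 1 \end{smallmatrix}\bigr) \in \mathcal{N}$, where $a_{r_1 c_3} = a_{r_2 c_1} = 0$ again follow from shortest-path optimality. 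Each case contradicts the hypothesis, so every block is lonesum.

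The main obstacle is the case split on $d(p, q)$: one must check that shortest-path minimality pins down exactly the zero entries required for the forbidden patterns to land inside $\mathcal{N}$, rather than in some neighboring $(0,1)$-matrix that merely fails to be lonesum. Once this local geometric argument is in place, the block-diagonal decomposition is an immediate consequence of the connected-component structure of $G_A$.
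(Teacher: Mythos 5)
Your proof is correct, but it takes a genuinely different route from the paper's. The paper proves (ii) $\Rightarrow$ (i) by induction on the number of columns: it strips off the last column, applies the inductive hypothesis to decompose the remainder into Ferrers blocks plus a zero block, and then does a case analysis on how the new column meets each block (showing at most one $\boldsymbol{b}_i$ is non-zero, then splitting on whether the part of the column lying in the zero block has $1$'s). You instead identify the blocks a priori as the connected components of the bipartite row--column incidence graph and reduce everything to showing each component block is lonesum, which you do by converting a forbidden $\bigl(\begin{smallmatrix}1&0\\0&1\end{smallmatrix}\bigr)$ pattern together with a shortest connecting path into an explicit member of $\mathcal{N}$; your case analysis on $d(p,q)$ checks out (the required zero entries are indeed forced by shortest-path minimality, and the common neighbor $c$ in the $d=2$ case is automatically distinct from $s$ and $t$). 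What your approach buys: it is non-inductive, it makes the uniqueness of the decomposition transparent (the blocks are forced to be the connected components, which also reproves the paper's Proposition on uniqueness), and it supplies the details of (i) $\Rightarrow$ (ii) that the paper dismisses as clear, namely heredity of lonesum decomposability under taking submatrices plus a direct check that $U$ is not decomposable. What the paper's induction buys is brevity. One cosmetic remark: your $d\ge 6$ case is subsumed by the $d=4$ case, since the initial segment $p-c_1-r_1-c_2-r_2$ of a shortest path is itself a shortest path realizing $d(p,r_2)=4$, so two cases ($d=2$ and $d\ge 4$) would suffice.
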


\begin{proof}
It is clear that (i) $\Rightarrow$ (ii), and
we show  (ii) $\Rightarrow$ (i).
This statement clearly holds for 
$0\le m,n \le 2$, where $m$ and $n$ are
the numbers of rows and columns of $A$,  respectively.
A transpose of a lonesum decomposable 
matrix is also lonesum decomposable,
hence we only have to prove
that if the statement holds for 
all $m\times n $ matrices,
then it holds for any $m\times (n+1)$ matrix
for $m,n \ge 2$.

Let $A$ be an $m\times (n+1)$ $(0,1)$-matrix
not containing an element of $\mathcal{N}$.
The matrix obtained by removing the $(n+1)$-st column from $A$
is $m\times n$ matrix.
Hence, by the induction assumption, 
the matrix $A$ can be transformed as 
\[    A \sim  
\left( 
\begin{array}{cccc|c}
F_1     &  &O & &\boldsymbol{b}_1\\
&    \ddots  &  & & \vdots  \\	
O&   &	F_{k} & & \boldsymbol{b}_k \\
&     &	 & O  & \boldsymbol{c}
 \end{array}
\right),
\]
where $F_i$ ($1\le i \le k$) are 
Ferrers matrices with no zero rows or columns,
and
$\boldsymbol{b}_i$ ($1\le i \le k$) and 
$\boldsymbol{c}$ are 
$(0,1)$-vectors.
If there exist two non-zero vectors 
$\boldsymbol{b}_i$ and $\boldsymbol{b}_j$ ($i\neq j$),
then the submatrix 
$\begin{pmatrix}
F_i & O   & \boldsymbol{b}_i \\
O   & F_j & \boldsymbol{b}_j 
\end{pmatrix}$
contains a matrix 
$\begin{pmatrix}
1 & 0  & 1\\
0 & 1&  1 
\end{pmatrix}$, and 
this contradicts the assumption that $A$ does not contain
any element of $\mathcal{N}$.
Therefore, there is at most one non-zero vector
in $\boldsymbol{b}_i$ ($1\le i \le k$), 
and we can set 
$\boldsymbol{b}_1 = \cdots = \boldsymbol{b}_{k-1}=\boldsymbol{0}$
without loss of generality.

We consider the two cases where
(i) $\boldsymbol{c}$ has $1$'s  and 
(ii) $\boldsymbol{c}$ has no $1$'s.
\vspace{5pt}

\noindent (i). The case that $\boldsymbol{c}$ has $1$'s.

If the vector $\boldsymbol{b}_k$
has both $0$'s and $1$'s, then 
$\begin{pmatrix}
F_k    & \boldsymbol{b}_k \\
O    & \boldsymbol{c} 
\end{pmatrix}$
contains a matrix 
$\begin{pmatrix}
1 & 1  \\
1 & 0 \\ 
0 & 1
\end{pmatrix}$ or 
$\begin{pmatrix}
1 & 0  \\
1 & 1 \\ 
0 & 1
\end{pmatrix}$,
and this contradicts the assumption that $A$ does not contain
an element of $\mathcal{N}$.
Therefore, 
$\boldsymbol{b}_k= \boldsymbol{1}$ or
$ \boldsymbol{0}$.
If $\boldsymbol{b}_k= \boldsymbol{1}$,
then 
\begin{equation}\label{eq:mainthm1_1}
\left( 
\begin{array}{cc|c}
	F_{k}&      & \boldsymbol{b}_k \\
             & O  & \boldsymbol{c}
 \end{array}
\right) \sim
\left( 
\begin{array}{ccc}
\boldsymbol{1} & F_{k} &     \\
\boldsymbol{c}     &          &O  
 \end{array}
\right) .
\end{equation}
Because the right-hand side of 
\eqref{eq:mainthm1_1}
is a lonesum matrix, the statement holds.
If $\boldsymbol{b}_k= \boldsymbol{0}$,
then 
\begin{equation}\label{eq:mainthm1_2}
\left( 
\begin{array}{cc|c}
	F_{k}&      & \boldsymbol{b}_k \\
             & O  & \boldsymbol{c}
 \end{array}
\right) \sim
\left( 
\begin{array}{ccc}
F_{k} & \boldsymbol{0}  &     \\
          &   \boldsymbol{c}    & O  
 \end{array}
\right) .
\end{equation}
The right-hand side of \eqref{eq:mainthm1_2}
is lonesum decomposable 
of order $2$, and hence the statement again holds.
\vspace{5pt}

\noindent (ii). The case that
$\boldsymbol{c}$ has no $1$'s.

We have
\begin{equation}\label{eq:mainthm1_3}
\left( 
\begin{array}{cc|c}
	F_{k}&      & \boldsymbol{b}_k \\
             & O  & \boldsymbol{c}
 \end{array}
\right) \sim
\left( 
\begin{array}{ccc}
F_{k} & \boldsymbol{b}_k  &     \\
          &   \boldsymbol{0}    & O  
 \end{array}
\right) .
\end{equation}
By Proposition \ref{prop:Ryser},
if the matrix
$(F_k\, \boldsymbol{b}_k)$ is not a lonesum matrix
then it contains $\begin{pmatrix}
1 & 0 \\ 0 & 1 
\end{pmatrix}$ or 
$\begin{pmatrix}
0 & 1 \\ 1 & 0 
\end{pmatrix}$ as a submatrix.
Because $F_k$ has no zero columns, 
the matrix $(F_k\, \boldsymbol{b}_k)$ also contains 
$\begin{pmatrix}
1 & 1 & 0 \\
1 & 0 & 1
\end{pmatrix}$
or
$\begin{pmatrix}
1 & 0 & 1 \\
1 & 1 & 0
\end{pmatrix}$
as a submatrix, 
and this contradicts the assumption that $A$ does not contain
an element of $\mathcal{N}$.
Therefore, the matrix
$(F_k\, \boldsymbol{b}_k)$ is a lonesum matrix
and the statement also holds in this case.
\end{proof}

For a $(0,1)$-matrix $A$, 
we define $\overline{A}$ as
the matrix in which the $0$ and $1$ 
entries of $A$ are inverted.
If $A$ is a lonesum matrix, then 
$\overline{A}$ is also a lonesum matrix.
However, lonesum decomposable matrices 
do not have this property.
For example, 
the matrix
$ V=
\begin{pmatrix}
1 & 0 & 0 \\
0 & 1 & 0
\end{pmatrix}$ is lonesum decomposable, 
but $ \overline{V}=
\begin{pmatrix}
0 & 1 & 1 \\
1 & 0 & 1
\end{pmatrix}\in \mathcal{N}$ is not lonesum decomposable.
The following theorem determines 
when a matrix $A$ satisfies that both 
$A$ and $\overline{A}$ are lonesum decomposable.

\begin{thm}
Let $A$ be a $(0,1)$-matrix.
Then, the following conditions are equivalent.

\begin{enumerate}
	\item Both $A$ and $\overline{A}$ are lonesum decomposable.
	\item $A$ is a lonesum matrix or $A \sim 
	\begin{pmatrix}
	{ \boldsymbol{1}} & { \boldsymbol{O}}  \\
	{ \boldsymbol{O}} & { \boldsymbol{1}}
	\end{pmatrix}
	$, where
	$\boldsymbol{1}$ (resp.~$\boldsymbol{O}$)
	is a matrix whose entries are all $1$ (resp.~$0$).
\end{enumerate}
\end{thm}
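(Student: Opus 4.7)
The plan is to split the equivalence into its two directions. The $(ii) \Rightarrow (i)$ direction is a direct verification: if $A$ is a lonesum matrix, then so is $\overline{A}$ because the two forbidden $2 \times 2$ patterns of Proposition~\ref{prop:Ryser} are complements of one another; and if $A \sim \begin{pmatrix}\boldsymbol{1}&\boldsymbol{O}\\\boldsymbol{O}&\boldsymbol{1}\end{pmatrix}$, then swapping the two column blocks brings $\overline{A}$ into the same block-diagonal form with all-one Ferrers blocks, so $\overline{A}$ is lonesum decomposable of order two.

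For $(i) \Rightarrow (ii)$, I would fix the decomposition matrix \eqref{eq:lonesumdecom} of $A$ with Ferrers blocks $F_1, \dots, F_k$ (each with no zero rows or columns) plus possibly trailing zero rows and columns. If $k \le 1$, then $A$ is itself obtained from a (padded) Ferrers matrix by permutations and hence is lonesum, so we are done. So assume $k \ge 2$; the goal is then to force $A \sim \begin{pmatrix}\boldsymbol{1}&\boldsymbol{O}\\\boldsymbol{O}&\boldsymbol{1}\end{pmatrix}$ by three successive applications of Theorem~\ref{thm:avoidthm} to $\overline{A}$.

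\textbf{Step 1} (no trailing zero row or column): a zero column of $A$ becomes an all-one column of $\overline{A}$, and together with the $(1,1)$-entries of $F_1$ and $F_2$ (which complement to $0$) it exposes the submatrix $\begin{pmatrix}0&1&1\\1&0&1\end{pmatrix}\in\mathcal{N}$, a contradiction; the zero-row case is symmetric. \textbf{Step 2} ($k = 2$): for $k \ge 3$, the three $(1,1)$-entries of $F_1, F_2, F_3$ span a $3 \times 3$ identity submatrix of $A$, whose complement in $\overline{A}$ again contains the same forbidden $2 \times 3$ pattern. \textbf{Step 3} (each $F_i$ is the all-one matrix): if $F_1$ has any $0$, then by the Ferrers condition $F_1(m_1, n_1) = 0$, while the no-zero-row/column assumption produces $i_0 < m_1$ and $j_0 < n_1$ with $F_1(i_0, n_1) = F_1(m_1, j_0) = 1$, and one more use of the Ferrers condition forces $F_1(i_0, j_0) = 1$; coupling rows $i_0, m_1$ of the $F_1$-block with the first row of the $F_2$-block, and columns $j_0, n_1$ of the $F_1$-block with the first column of the $F_2$-block, the induced $3 \times 3$ submatrix of $\overline{A}$ equals $\begin{pmatrix}0&0&1\\0&1&1\\1&1&0\end{pmatrix}$, whose last two rows form an element of $\mathcal{N}$, a contradiction. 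The symmetric argument forces $F_2$ to be all-one as well.

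The main obstacle will be Step~3, where one must identify \emph{which} rows and columns of a non-all-one Ferrers block to extract so that, combined with a single row and column of the neighboring block, the complement reveals a forbidden $2 \times 3$ pattern from $\mathcal{N}$. The corner zero $F_1(m_1, n_1)$ together with the witness $1$'s forced by the no-zero-row/column hypothesis produces exactly the required configuration.
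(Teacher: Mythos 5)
Your proof is correct and follows essentially the same route as the paper's: both reduce to the decomposition order $k$, dispose of $k\ge 3$ via the $3\times 3$ identity submatrix whose complement contains an element of $\mathcal{N}$, and for $k=2$ show that any $0$ inside (or outside) the two blocks forces a forbidden pattern in $\overline{A}$. Your Steps 1 and 3 simply make explicit the extraction that the paper summarizes as ``if $L_1$ or $L_2$ has $0$'s, then $\overline{A}$ contains an element of $\mathcal{N}$.''
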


\begin{proof}
It is clear that (ii) $\Rightarrow $ (i), and
we only have to prove that (i) $\Rightarrow $ (ii).
Assume that $A$ and $\overline{A}$ are both lonesum decomposable, and let
$k$ be the decomposition order of $A$.
When $k=0$ or $1$,  $A$ is a lonesum matrix.
When $k=2$, 
the matrix $A$ satisfies that 
\[ A\sim  \begin{pmatrix}
L_1 & O \\
O & L_2
\end{pmatrix}, \]
where $L_1$ and $L_2$ are non-zero lonesum matrices.
If $L_1$ or $L_2$ has $0$'s, then
the matrix $\overline{A}$ contains an element of $\mathcal{N}$ as a submatrix, 
and  $\overline{A}$ is not lonesum decomposable.
Therefore, $L_1=\boldsymbol{1}$ and $L_2=\boldsymbol{1}$.
When $k\ge 3$, 
the matrix $A$ contains a $3\times 3$ submatrix  $W$ satisfying
$W\sim  \begin{pmatrix}
1 & 0 & 0 \\
0 & 1 & 0 \\
0 & 0 & 1
\end{pmatrix}$.
This matrix contains  
$ \begin{pmatrix}
1 & 0 & 0 \\
0 & 1 & 0 \\
\end{pmatrix}$,
and this contradicts the condition that $\overline{A}$ is lonesum decomposable.
As a consequence, 
either $A$ is a lonesum matrix or $A \sim 
	\begin{pmatrix}
	{ \boldsymbol{1}} & { \boldsymbol{O}}  \\
	{ \boldsymbol{O}} & { \boldsymbol{1}}
	\end{pmatrix}
	$.
\end{proof}

\section{Generating function of lonesum decomposable matrices}
\label{sec:Generating function of lonesum decomposable matrices}

For a positive integer $k$, let 
$D_k(m,n)$ denote the number of $m\times n $ lonesum decomposable
matrices of decomposition order $k$.
For simplicity, we set
$D_k(m,0) = D_k(0,m) =0$ for $k\ge 1$ and $m\ge 0$,
and $D_0(m,n) = 1$ for $(m,n) \in \Z_{\ge 0}^2$.
Moreover, we define
$D(m,n):= \sum_{k=0}^{\infty} D_k(m,n)$
for $(m,n) \in \Z_{\ge 0}^2$.
This means that $D(m,n)$ is 
the number of all $m\times n$ lonesum decomposable matrices.
We can see that
$D_k(m,n)=0$ for $k>\min(m,n)$ and
$L(m,n)= D_0(m,n)+D_1(m,n)$.
We present tables showing 
$D_1(m,n)$, $D_2(m,n)$, and $D(m,n)$
at the end of this paper.

The generating functions for
$D_k$ and $D$ are given as follows:
\begin{thm}\label{thm:genfun_D_k}
The following equations hold:
\begin{align}\label{thm:genfun_1}
\sum_{m=0}^{\infty}  \sum_{n=0}^{\infty} D_k(m,n)
\frac{x^m}{m!}
\frac{y^n}{n!}
= \frac{e^{x+y}}{k!} 
\left(  \frac{1}{e^x+e^y-e^{x+y}}-1 \right)^k
\ \ \  (k\ge 0).
\end{align}

\begin{align}\label{thm:genfun_2}
\sum_{m=0}^{\infty}  \sum_{n=0}^{\infty} 
D(m,n)
\frac{x^m}{m!}
\frac{y^n}{n!}
= 
\exp\left(  \frac{1}{e^x+e^y-e^{x+y}} + x +y -1 \right).
\end{align}

\end{thm}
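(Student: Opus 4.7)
The plan is to derive \eqref{thm:genfun_1} by a direct combinatorial decomposition of an $m\times n$ lonesum decomposable matrix into its Ferrers blocks, and to obtain \eqref{thm:genfun_2} by summing over $k$. The crucial auxiliary quantity is $L^\ast(m,n)$, the number of $m\times n$ lonesum matrices with no zero rows and no zero columns, under the convention $L^\ast(0,0)=1$.

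The first step is to compute $\mathcal{L}^\ast(x,y):=\sum_{m,n\ge 0} L^\ast(m,n)\dfrac{x^m}{m!}\dfrac{y^n}{n!}$. Every lonesum matrix arises uniquely from such a matrix by adjoining zero rows and zero columns at freely chosen positions, so
\[L(m,n)=\sum_{m',n'}\binom{m}{m'}\binom{n}{n'}L^\ast(m',n').\]
Rewriting this identity in terms of exponential generating functions gives $\mathcal{L}(x,y)=e^{x+y}\mathcal{L}^\ast(x,y)$, and together with \eqref{eq:genfun_L} this yields $\mathcal{L}^\ast(x,y)=1/(e^x+e^y-e^{x+y})$.

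The main step is the decomposition itself. By the preceding proposition on uniqueness of the decomposition matrix, an $m\times n$ lonesum decomposable matrix of order $k$ is encoded by a partition $\{1,\ldots,m\}=R_0\sqcup R_1\sqcup\cdots\sqcup R_k$ together with a partition $\{1,\ldots,n\}=C_0\sqcup C_1\sqcup\cdots\sqcup C_k$ (with $R_i,C_i$ nonempty for $i\ge 1$, while $R_0$ and $C_0$ record the zero rows and zero columns), plus, for each $i\ge 1$, a lonesum matrix with no zero rows or columns placed on $R_i\times C_i$, all taken modulo the $S_k$-action permuting the $k$ block labels. Standard exponential-generating-function multiplication then gives
\[\sum_{m,n\ge 0}D_k(m,n)\dfrac{x^m}{m!}\dfrac{y^n}{n!}=\dfrac{e^x\,e^y}{k!}\bigl(\mathcal{L}^\ast(x,y)-1\bigr)^k,\]
where the two exponentials account for the free subsets $R_0,C_0$, the factor $(\mathcal{L}^\ast-1)^k$ encodes the $k$ labeled blocks of positive size (the ``$-1$'' removes the empty-block term $L^\ast(0,0)=1$), and $1/k!$ unorders the block labels. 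Substituting $\mathcal{L}^\ast(x,y)=1/(e^x+e^y-e^{x+y})$ from the first step establishes \eqref{thm:genfun_1}.

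Formula \eqref{thm:genfun_2} then follows at once by summing over $k\ge 0$ and recognizing $\sum_k t^k/k!=\exp(t)$ with $t=\mathcal{L}^\ast(x,y)-1$. The main obstacle is the bookkeeping in the middle step: tracking the $1/k!$ factor correctly under the unordered-blocks convention, and confirming that the ``no zero rows or columns'' condition on each $L^\ast$-block matches the constraint on the Ferrers blocks in \eqref{eq:lonesumdecom}. Both points resolve cleanly once the uniqueness of the decomposition matrix is invoked; from there the computation reduces to routine manipulation of exponential generating functions.
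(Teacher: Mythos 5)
Your proposal is correct and follows essentially the same route as the paper: both isolate the generating function $1/(e^x+e^y-e^{x+y})$ for lonesum matrices with no zero rows or columns, raise its ``$-1$'' shift to the $k$-th power divided by $k!$ to account for the $k$ unordered nonempty Ferrers blocks, multiply by $e^{x+y}$ for the zero rows and columns, and sum over $k$ to get \eqref{thm:genfun_2}. In fact you spell out the block-partition bookkeeping behind the $\frac{1}{k!}(\cdot)^k$ step more explicitly than the paper does, which simply asserts it.
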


\begin{proof}

Let $\tilde{L}(m,n)$ be
the number of  $m\times n$ lonesum matrices with
no zero rows or columns. Here, we set
$\tilde{L}(0,0) =1$
and $\tilde{L}(m,0) = \tilde{L}(0,m) =0 $ for $m>0$. 
Benyi and Hajnal \cite[Theorem 3]{BH1}
mentioned that the generating function of 
$\tilde{L}(m,n)$ is given by
\begin{align}\label{eq:genfun_tildeL}
 \sum_{m=0}^{\infty}  \sum_{n=0}^{\infty} \tilde{L}(m,n) \dfrac{x^m}{m!}\dfrac{y^n}{n!}
= \dfrac{1}{e^x+e^y -e^{x+y}}. 
\end{align}
By definition, it holds that
\begin{align}
L(m,n) =
\sum_{i=0}^m\sum_{j=0}^n
\binom{m}{i}\binom{n}{j}
\tilde{L}(i,j),
\end{align}
and we can also obtain the generating function \eqref{eq:genfun_L}
of $L(m,n)$ from \eqref{eq:genfun_tildeL}.
We note that multiplying 
the generating function
\eqref{eq:genfun_tildeL}  by $e^{x+y}$
means that it allows 
the lonesum matrices to have zero columns or zero rows.

Let $\tilde{D}_k(m,n)$ be the number of $m\times n$ lonesum decomposable matrices of order $k$ with no zero rows and columns.
We set $\tilde{D}_0(m,n)=0$ if 
$(m,n)\neq (0,0)$
and $=1$ if $(m,n)=(0,0)$.
When $k=1$, we have
$\tilde{D}_1(m,n) 
= \tilde{L}(m,n)$ if $(m,n)\neq (0,0)$
and $=0$ if $(m,n)=(0,0)$.
Therefore, we have
\[ 
\sum_{m=0}^{\infty}  \sum_{n=0}^{\infty} 
\tilde{D}_1(m,n) \dfrac{x^m}{m!} \dfrac{y^n}{n!}
= \frac{1}{e^x+e^y-e^{x+y}}-1.\]
In general, 
the generating function of $\tilde{D}_k$ can be given by 
\begin{align}\label{eq:tilde{D_k}genfun}
\sum_{m=0}^{\infty}  \sum_{n=0}^{\infty} 
\tilde{D}_k(m,n) \dfrac{x^m}{m!} \dfrac{y^n}{n!}
= \frac{1}{k!} 
\left(  \frac{1}{e^x+e^y-e^{x+y}}-1 \right)^k\ \ \ 
(k\ge 0).
\end{align}
The generating function of $D_k$
can be obtained by
multiplying \eqref{eq:tilde{D_k}genfun} by $e^{x+y}$,
hence we obtain \eqref{thm:genfun_1}.
Equation \eqref{thm:genfun_2} follows
immediately from \eqref{thm:genfun_1}.
\end{proof}

\begin{rem}
Ju and Seo \cite{JS}
studied generating functions for the
number of matrices not including various $2\times 2$ matrices.
Theorem \ref{thm:genfun_D_k} gives
a similar result on
matrices that do not include the elements of
$\mathcal{N}$.
\end{rem}

It is known that the numbers $L(m,n)$ (or the poly-Bernoulli numbers of negative indices) 
satisfy a recurrence relation (e.g.~\cite[Prop.~14.3 and 14.4]{AIK}).
Our numbers $D_k(m,n)$ also satisfy a recurrence relation.
\begin{prop}
For $k\ge 1$ and $m,n\ge 0$, we have
\begin{align*}
&D_k(m+1,n) \\
&= D_k(m,n)
+\sum_{l=0}^{n-1} \binom{n}{l}
\left( (k-1)D_k(m,l) + D_{k-1} (m,l)  + D_k(m,l+1)  \right).
\end{align*}
\end{prop}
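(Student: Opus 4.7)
The plan is to translate the recurrence into an equivalent identity of generating functions and then verify it using the explicit formula in Theorem~\ref{thm:genfun_D_k}. Write $F_k(x,y) = \sum_{m,n\geq 0} D_k(m,n) \frac{x^m}{m!}\frac{y^n}{n!}$. The coefficient of $\frac{x^m}{m!}\frac{y^n}{n!}$ in $\partial_x F_k - F_k$ equals $D_k(m+1,n) - D_k(m,n)$, so this is the generating function form of the left-hand side.

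For the right-hand side, I would use the elementary fact that for any double sequence $a_{m,l}$, the number $\sum_{l=0}^n \binom{n}{l} a_{m,l}$ is the coefficient of $\frac{x^m y^n}{m!\, n!}$ in $e^y \cdot A(x,y)$, where $A(x,y) = \sum a_{m,l} \frac{x^m y^l}{m!\, l!}$. Applying this to $D_k(m,l)$, to $D_{k-1}(m,l)$, and (after a shift of the summation index) to $D_k(m,l+1)$, and in each case subtracting the missing $l=n$ term, the three pieces of the right-hand side become the $\frac{x^m y^n}{m!\, n!}$-coefficients of $(e^y - 1) F_k$, $(e^y - 1) F_{k-1}$, and $(e^y - 1)\partial_y F_k$ respectively. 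Thus the recurrence is equivalent to
\[
\partial_x F_k - F_k \;=\; (e^y - 1)\bigl[(k-1) F_k + F_{k-1} + \partial_y F_k\bigr].
\]

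To verify this, set $G = (e^x+e^y-e^{x+y})^{-1}$, so that $F_k = \frac{e^{x+y}}{k!}(G-1)^k$ by Theorem~\ref{thm:genfun_D_k}. Direct differentiation gives
$\partial_x F_k - F_k = \frac{e^{x+y}}{(k-1)!}(G-1)^{k-1} G_x$ and $\partial_y F_k - F_k = \frac{e^{x+y}}{(k-1)!}(G-1)^{k-1} G_y$; collecting terms (using $(G-1)+1 = G$) then yields
\[
(k-1)F_k + F_{k-1} + \partial_y F_k \;=\; \frac{e^{x+y}}{(k-1)!}(G-1)^{k-1}(G + G_y).
\]
So the identity reduces to $G_x = (e^y-1)(G+G_y)$, and computing $G_x = e^x(e^y-1)G^2$ and $G_y = e^y(e^x-1)G^2$ from the defining formula of $G$, this further reduces to the key algebraic identity $G + G_y = e^x G^2$, which follows from $(e^x+e^y-e^{x+y}) + e^y(e^x-1) = e^x$.

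The main obstacle is purely notational: keeping track of the three sums on the right-hand side and ensuring that the index shift in the $D_k(m,l+1)$ sum produces the $\partial_y F_k$ term correctly. Once the recurrence is translated into generating function form, the remaining verification is a short computation centered on the identity $G + G_y = e^x G^2$.
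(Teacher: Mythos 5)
Your proposal is correct and follows essentially the same route as the paper: the paper also proves the recurrence by verifying the generating function identity $\partial_x G_k = G_k + (e^y-1)\bigl((k-1)G_k + G_{k-1} + \partial_y G_k\bigr)$ and comparing coefficients. You merely spell out the "direct calculation" (via the identity $G + G_y = e^x G^2$) that the paper leaves to the reader.
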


\begin{proof}
Let $G_k (x,y):= \dfrac{e^{x+y}}{k!} \left(  \dfrac{1}{e^x+e^y -e^{x+y}} -1  \right)^k $.
By direct calculations, we can verify that
\begin{align}\label{eq:recurrence_gf}
 \dfrac{\partial }{\partial  x } G_k =
G_k + (e^y-1) \left(  (k-1)G_k + G_{k-1} + \dfrac{\partial }{\partial  y} G_k  \right).
\end{align}
By comparing the coefficients
of both sides of \eqref{eq:recurrence_gf}, 
we obtain the proposition.
\end{proof}

To conclude this section, we give
a relation between $D_k(m,n)$ and the
poly-Bernoulli polynomials.
For any integers $k_1,\ldots ,k_r$, 
we define the multi-poly Bernoulli(-star) polynomials
$B_{n,\star}^{(k_1,\ldots, k_r)}(x)$ by
\begin{equation}\label{eq:mpBsp}
e^{-xt}
 \dfrac{\text{Li}^{\star}_{k_1,\ldots, k_r}(1-e^{-t}) }{1-e^{-t}}
= \sum_{n=0}^{\infty} 
B_{n,\star}^{(k_1,\ldots, k_r)}(x)  \dfrac{t^n}{n!},
\end{equation}
where 
\[ \text{Li}^{\star}_{k_1,\ldots, k_r}(z)
:= \sum_{1\le m_1\le \cdots \le m_r}
\dfrac{z^{m_r}}{m_1^{k_1} \cdots m_r^{k_r}}.
\]
These polynomials have been introduced by Imatomi \cite[\S6]{I},
but they were defined there 
with $e^{-xt}$ replaced by $e^{xt}$
in \eqref{eq:mpBsp}.
When $r=1$, 
the polynomial $B_{n,\star}^{(k)}(x)$ coincides with the  
$n$-th poly-Bernoulli polynomial $B_n^{(k)}(x)$ defined by 
\begin{align*}
e^{-xt} \dfrac{\text{Li}_k(1-e^{-t})}{1-e^{-t}}
= \sum_{n=0}^{\infty} B_n^{(k)}(x) \dfrac{t^n}{n!}
\end{align*}
(see e.g., Coppo-Candelpergher \cite{CC}).

\begin{prop}
For integers $k,m,n\ge 0$, we have
\[ D_k(m,n) =
\dfrac{(-1)^k}{k!} \left(  1+\sum_{i=1}^k  \binom{k}{i} (-1)^i B_{n,\star}^{ ( \scriptsize\overbrace{ 0,\ldots , 0}^{i-1}, -m) } (i-1)   \right).
\]
\end{prop}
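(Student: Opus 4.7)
The plan is to start from the generating function for $D_k(m,n)$ given in Theorem \ref{thm:genfun_D_k} and extract the coefficient of $\frac{x^m}{m!}\frac{y^n}{n!}$ after expanding the $k$-th power by the binomial theorem. Write
\[
\sum_{m,n\ge 0} D_k(m,n)\frac{x^m}{m!}\frac{y^n}{n!}
=\frac{(-1)^k}{k!}\sum_{i=0}^k \binom{k}{i}(-1)^i\frac{e^{x+y}}{(e^x+e^y-e^{x+y})^i},
\]
where the $i=0$ summand is interpreted as $e^{x+y}$. The $i=0$ term contributes $\frac{(-1)^k}{k!}$ to $D_k(m,n)$, matching the isolated $1$ inside the parentheses of the stated formula, so it remains to identify the coefficient of $\frac{x^m}{m!}\frac{y^n}{n!}$ in $\dfrac{e^{x+y}}{(e^x+e^y-e^{x+y})^i}$ with $B_{n,\star}^{(0,\ldots,0,-m)}(i-1)$ (with $i-1$ zeros) for each $i\ge 1$.

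The key algebraic identity is
\[
e^x+e^y-e^{x+y}=1-(e^x-1)(e^y-1).
\]
I would introduce the substitution $u:=1-e^{-y}$, so that $e^{-y}=1-u$, $e^{y}=1/(1-u)$, and $e^{y}-1=u/(1-u)$. A short calculation then gives
\[
\frac{e^{x+y}}{(e^x+e^y-e^{x+y})^i}=\frac{e^x(1-u)^{i-1}}{(1-ue^x)^i}.
\]
This is the main computational step: once this closed form is in hand, the extraction of the $x$-coefficient is routine.

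Expanding $(1-ue^x)^{-i}=\sum_{j\ge 0}\binom{j+i-1}{i-1}u^j e^{jx}$ and reading off $[x^m/m!]$, one obtains
\[
\left[\frac{x^m}{m!}\right]\frac{e^{x+y}}{(e^x+e^y-e^{x+y})^i}
=(1-u)^{i-1}\sum_{j\ge 0}\binom{j+i-1}{i-1}(j+1)^m u^j.
\]
On the other hand, counting weakly increasing sequences $1\le m_1\le\cdots\le m_{i-1}\le \ell$ gives
\[
\mathrm{Li}^{\star}_{0,\ldots,0,-m}(z)=\sum_{\ell\ge 1}\binom{\ell+i-2}{i-1}\ell^m z^{\ell},
\]
so after dividing by $z=u$ and re-indexing by $j=\ell-1$, the sum above is exactly $\mathrm{Li}^{\star}_{0,\ldots,0,-m}(u)/u$. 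Since $(1-u)^{i-1}=e^{-(i-1)y}$ and $u=1-e^{-y}$, the right-hand side is precisely the generating function \eqref{eq:mpBsp} evaluated at $x=i-1$, namely $\sum_n B_{n,\star}^{(0,\ldots,0,-m)}(i-1)\tfrac{y^n}{n!}$. Substituting this identification into the binomial expansion yields the claimed formula.

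The main obstacle is the bookkeeping in the middle step: one must perform the substitution $u=1-e^{-y}$ cleanly to obtain the rational expression $e^x(1-u)^{i-1}/(1-ue^x)^i$, and one must correctly count the number of weakly increasing tuples in the definition of $\mathrm{Li}^{\star}$ so that the binomial coefficient $\binom{j+i-1}{i-1}$ appearing on both sides matches. Everything else is a direct application of the binomial theorem and the definition of the multi-poly-Bernoulli-star polynomials.
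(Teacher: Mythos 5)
Your proof is correct and follows essentially the same route as the paper's: binomially expand the $k$-th power and identify $e^{x+y}/(e^x+e^y-e^{x+y})^i$ with the generating function $\sum_n B_{n,\star}^{(0,\ldots,0,-m)}(i-1)\,y^n/n!$ via the factorization $e^x+e^y-e^{x+y}=e^y\bigl(1-e^x(1-e^{-y})\bigr)$ and the definition \eqref{eq:mpBsp}. The only cosmetic difference is that you collapse the $i$-fold geometric series into a single sum weighted by $\binom{j+i-1}{i-1}$ and separately count the weakly increasing tuples in $\mathrm{Li}^{\star}_{0,\ldots,0,-m}$, whereas the paper matches the $i$-fold sum over $(l_1,\ldots,l_i)$ to the multi-index sum directly via $m_j=l_1+\cdots+l_j+1$.
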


\begin{proof}

For an integer $i \ge 1$, we have
\begin{align*}
 \frac{e^{x+y}}{(e^x+e^y-e^{x+y})^i} 
&= e^{x+y} \left( \dfrac{1}{e^y(1-e^x(1-e^{-y}))} \right)^i\\
&= e^{x+y} e^{-iy} 
     \sum_{l_1,\ldots , l_i \ge 0}
       e^{(l_1+\cdots +l_i)x}  (1-e^{-y})^{l_1+\cdots +l_i}     \\
&= e^{-(i-1)y} 
     \sum_{l_1,\ldots , l_i \ge 0}
       e^{(l_1+\cdots +l_i+1)x}  (1-e^{-y})^{l_1+\cdots +l_i+1}  \dfrac{1}{1-e^{-y}}    \\
&= e^{-(i-1)y}  \sum_{m=0}^{\infty} 
     \sum_{l_1,\ldots , l_i \ge 0}
       \dfrac{(1-e^{-y})^{l_1+\cdots +l_i+1}}
                {(l_1+\cdots +l_i+1)^{-m}}  \dfrac{1}{1-e^{-y}}  \dfrac{x^m}{m!}   \\
&= e^{-(i-1)y}  \sum_{m=0}^{\infty} 
      \dfrac{\text{Li}^{\star}_{0,\ldots , 0,-m}(1-e^{-y})  }{1-e^{-y}}  \dfrac{x^m}{m!}   \\
&= \sum_{m=0}^{\infty}  \sum_{n=0}^{\infty}
      B_{n,\star}^{ ( \scriptsize\overbrace{ 0,\ldots , 0}^{i-1}, -m) } (i-1)   \dfrac{x^m}{m!} \dfrac{x^n}{n!}.
 \end{align*} 
From this formula and the binomial expansion,
we obtain that
\begin{align*}
& \dfrac{e^{x+y}}{k!} \left( \dfrac{1}{e^x+e^y-e^{x+y}}-1\right)^k\\
&= \dfrac{(-1)^k}{k!} 
\left( e^{x+y}+ \sum_{i=1}^{k} \binom{k}{i}(-1)^i
\dfrac{ e^{x+y} }{(e^x+e^y-e^{x+y})^i}    \right) \\
&=\sum_{m=0}^{\infty}  \sum_{n=0}^{\infty} \dfrac{(-1)^k}{k!} 
\left( 1+ \sum_{i=1}^{k}\binom{k}{i}(-1)^i 
B_{n,\star}^{ ( \scriptsize\overbrace{ 0,\ldots , 0}^{i-1}, -m) } (i-1) 
   \right)  \dfrac{x^m}{m!} \dfrac{x^n}{n!},
\end{align*}
and this proves the proposition.
\end{proof}

\begin{rem}

Kaneko, Sakurai, and Tsumura \cite{KST} introduced
a sequence $\mathscr{B}_m^{(-l)} (n)$ as
 \[ \mathscr{B}_m^{(-l)} (n) :=
\sum_{j=0}^n \left[  n \atop j \right]  
B_m^{(-l-j)}(n)\ \ \ (l,m,n\in \Z_{\ge 0}), \]
where $\left[ n \atop j \right]$ are
the Stirling numbers of the first kind.
They proved that this sequence 
has the following simple generating function:
\begin{align}\label{eq:KST_generating}
\sum_{l=0}^{\infty}
\sum_{m=0}^{\infty}
 \mathscr{B}_m^{(-l)} (n) \frac{x^l}{l!} \frac{y^m}{m!}
 = \frac{n! e^{x+y}}{(e^x + e^y - e^{x+y})^{n+1}}.
\end{align}
By using this formula, we can also give 
an expression for $D_k(m,n)$ in 
terms of poly-Bernoulli polynomials:
\begin{align}\label{eq:D_k_polybernoulli}
D_k (m,n)=
\frac{(-1)^k}{k!} 
\left(  
1+ \sum_{i=0}^{k-1} \frac{(-1)^{i+1} }{i !}
\binom{k}{i+1}  \sum_{j=0}^i
\left[ i \atop j \right] B_n^{(-m-j)} (i)
\right).
\end{align}
\end{rem}

%
%
%

\section{Congruences for $D_k(m,n)$}
\label{sec:Congruences for $D_k(m,n)$}

It is known that the numbers of $m\times n$ lonesum matrices (or poly-Bernoulli numbers of negative indices) have 
the following expression:
\begin{align*}
L(m,n) = 
\sum_{j=0}^{\min(m,n)}
(j!)^2 \left\{  m+1 \atop j+1  \right\}
\left\{  n+1 \atop j+1  \right\},
\end{align*}
where $\left\{ m \atop j \right\}$ 
are the Stirling numbers of the second kind
(see e.g., \cite{AK} \cite{B}).
We note that $\left\{  m \atop j  \right\}=0$ for $j>m\ge 1$.
The following proposition says that
the numbers $D_k(m,n)$ also have a similar expression.

\begin{prop}\label{prop:D_k_stirling}
For integers $k \ge 1$ and $m,n \ge 0$ we have
\begin{align}\label{eq:D_k_stirling}
D_k (m,n)=
\frac{1}{k!} \sum_{j=k}^{\min(m, n)}
\binom{j-1}{k-1}
\left(j! \right)^2
\left\{m+1 \atop j+1 \right\}
\left\{n+1 \atop j+1 \right\}.
\end{align}
\end{prop}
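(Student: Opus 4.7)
The plan is to derive the identity directly from the generating function formula \eqref{thm:genfun_1} by performing algebraic manipulations on both sides and comparing coefficients. The key algebraic observation is the factorization
\[ e^x + e^y - e^{x+y} = 1 - (e^x-1)(e^y-1), \]
which recasts the denominator in the generating function into a convenient form for expansion.

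Setting $u = (e^x-1)(e^y-1)$, I would first rewrite
\[ \frac{1}{e^x+e^y-e^{x+y}} - 1 = \frac{1}{1-u} - 1 = \frac{u}{1-u}, \]
and then apply the negative binomial series to obtain
\[ \left( \frac{1}{e^x+e^y-e^{x+y}} - 1 \right)^k = \frac{u^k}{(1-u)^k} = \sum_{j \ge k} \binom{j-1}{k-1} u^j = \sum_{j \ge k} \binom{j-1}{k-1} (e^x-1)^j (e^y-1)^j. \]
Multiplying by $e^{x+y}/k!$ distributes the exponential factor into each $(e^x-1)^j (e^y-1)^j$.

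Next I would invoke the standard Stirling-number identity for the shifted exponential generating function. Differentiating the known relation $\sum_{m \ge 0} \left\{m \atop j+1\right\} x^m/m! = (e^x-1)^{j+1}/(j+1)!$ yields
\[ \sum_{m \ge 0} \left\{m+1 \atop j+1\right\} \frac{x^m}{m!} = \frac{e^x (e^x-1)^j}{j!}, \]
so $e^x(e^x-1)^j = j! \sum_m \left\{m+1 \atop j+1\right\} x^m/m!$. Applying this to both the $x$ and $y$ factors gives
\[ e^{x+y}(e^x-1)^j (e^y-1)^j = (j!)^2 \sum_{m,n \ge 0} \left\{m+1 \atop j+1\right\}\left\{n+1 \atop j+1\right\} \frac{x^m}{m!}\frac{y^n}{n!}. \]
Substituting back, dividing by $k!$, and equating the coefficients of $x^m y^n/(m!\,n!)$ on both sides against Theorem \ref{thm:genfun_D_k} yields the stated formula. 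The restriction $j \le \min(m,n)$ on the summation range follows automatically from the vanishing $\left\{m+1 \atop j+1\right\} = 0$ for $j > m$.

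There is no real obstacle here; the proof is a routine manipulation once the factorization $e^x + e^y - e^{x+y} = 1 - (e^x-1)(e^y-1)$ is noticed, together with the shifted Stirling identity. The only place requiring mild care is the bookkeeping for the lower summation bound $j \ge k$ coming from the negative binomial series, and the verification that the resulting identities hold term by term when comparing with \eqref{thm:genfun_1}.
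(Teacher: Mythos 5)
Your proposal is correct and follows essentially the same route as the paper: the factorization $e^x+e^y-e^{x+y}=1-(e^x-1)(e^y-1)$, the negative binomial expansion in $u=(e^x-1)(e^y-1)$, and the Stirling-number generating function (the paper phrases the shifted identity as $e^{x}(e^x-1)^j=\frac{1}{j+1}\frac{d}{dx}(e^x-1)^{j+1}$ before expanding, which is exactly your differentiation step). No substantive differences.
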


\begin{proof}
The generating function for $D_k(m,n)$ can be transformed as 
\begin{align*}
&\frac{e^{x+y}}{k!}
\left( \dfrac{1}{e^x+e^y-e^{x+y}}-1   \right)^k\\
&=
\frac{e^{x+y}}{k!}
\left( \dfrac{(e^x-1)(e^y-1)}{ 1-(e^x-1)(e^y-1) }  \right)^k \\
&=
\frac{e^{x+y}}{k!}
\sum_{j=k}^{\infty} \binom{j-1}{k-1}
(e^x-1)^j(e^y-1)^j\\
&=
\frac{1}{k!}
\sum_{j=k}^{\infty} \binom{j-1}{k-1}
\dfrac{1}{(j+1)^2}
\frac{d}{dx}(e^x-1)^{j+1}  \frac{d}{dy}(e^y-1)^{j+1}.
\end{align*}
Because
\[ (e^z-1)^m
= m! \sum_{n=m}^{\infty} 
  \left\{n \atop m \right\} \dfrac{z^n}{n!},
\]
we have
\begin{align*}
&\frac{e^{x+y}}{k!}
\left( \dfrac{1}{e^x+e^y-e^{x+y}}-1   \right)^k\\
&=
\frac{1}{k!}
\sum_{j=k}^{\infty} \binom{j-1}{k-1}
(j!)^2
\sum_{l=j}^{\infty}\sum_{m=j}^{\infty}
  \left\{l+1 \atop j+1 \right\}
  \left\{m+1 \atop j+1 \right\}
\dfrac{x^l}{l!} \dfrac{y^m}{m!}.
\end{align*}
Therefore, we obtain \eqref{eq:D_k_stirling}.
\end{proof}

By using this expression, 
we give some congruences for 
$D_k(m,n)$ modulo a prime.
We first recall the following lemma
in order to prove them.
All of the formulas are deduced from the 
well-known identities
\[
\left\{m \atop k \right\}
=\left\{m-1 \atop k-1 \right\}
+k\left\{m-1 \atop k \right\},\ \ \ \ 
\left\{m \atop k \right\}
=\dfrac{1}{k!} \sum_{n=1}^k (-1)^{k-n} \binom{k}{n}n^m,
\]
and we omit their proofs.

\begin{lemma}\label{lemma:stirlinglemma}
Let $p$ be a prime.
\begin{enumerate}
\item
For positive integers $m$ and $m'$ with $m\equiv m' \pmod{p-1}$
and $0\le i \le p$,
we have 
$\displaystyle \left\{m \atop i \right\} \equiv
  \left\{m' \atop i \right\}  \pmod{p}$.

\item $\displaystyle \left\{p \atop i \right\} \equiv
0 \pmod{p}$ for $2\le i \le p-1$.

\item $\displaystyle \left\{m \atop 2 \right\} 
=2^{m-1}-1$ for $m\ge 1$.

\end{enumerate}

\end{lemma}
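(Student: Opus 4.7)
The plan is to dispatch parts (iii) and (ii) first as short applications of the two highlighted identities, leaving (i) as the main piece of work.

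For (iii), substituting $k=2$ into the explicit formula $\left\{m \atop k\right\} = \frac{1}{k!}\sum_{n=1}^{k}(-1)^{k-n}\binom{k}{n}n^m$ gives $\frac{1}{2}(2^m-2)=2^{m-1}-1$ at once. For (ii), I would apply the same formula to $\left\{p \atop i\right\}$ and invoke Fermat's little theorem in the form $n^p\equiv n \pmod p$; the right-hand side collapses to $i!\left\{1 \atop i\right\}$, which vanishes for $i\ge 2$, and dividing by the unit $i!$ (legitimate because $i\le p-1$) yields the claim. (Equivalently, (ii) is immediate from (i) by taking $m=p$, $m'=1$.)

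For part (i), I would split according to $i$. The case $i=0$ is trivial since $\left\{m \atop 0\right\}=0$ for $m\ge 1$. For $1\le i\le p-1$, Fermat gives $n^m\equiv n^{m'} \pmod p$ for every $1\le n\le i$ (each coprime to $p$) whenever $m\equiv m' \pmod{p-1}$ with $m,m'\ge 1$, so the sums in the explicit formula agree term by term modulo $p$; the prefactor $\tfrac{1}{i!}$ is licit since $i<p$ makes $i!$ a unit mod $p$.

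The main obstacle is the boundary case $i=p$, where $p!$ is divisible by $p$ and the explicit formula cannot be directly inverted. I would instead invoke the recurrence $\left\{m \atop p\right\} = \left\{m-1 \atop p-1\right\} + p\left\{m-1 \atop p\right\}$, which reduces modulo $p$ to $\left\{m \atop p\right\}\equiv\left\{m-1 \atop p-1\right\} \pmod p$, and then apply the just-established case $i=p-1$ to the shifted exponents $m-1\equiv m'-1 \pmod{p-1}$. The subtlety I expect to need care with is that the reduced exponents still satisfy the hypotheses of the previous case (both sides nonnegative, and in particular that Fermat remains applicable on the shifted indices), which is really the only bookkeeping beyond the two identities the author has already singled out.
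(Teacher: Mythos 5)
Your overall route is exactly the one the paper intends: the author omits the proof, stating only that everything follows from the displayed recurrence and explicit formula, and your treatment of (iii), of (ii), and of (i) for $0\le i\le p-1$ is correct and complete (in particular you rightly observe that $i!$ is a unit for $i\le p-1$ and that Fermat applies to each $n$ with $1\le n\le i\le p-1$).

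However, the step you set aside as ``only bookkeeping'' in the case $i=p$ is a genuine gap, and it cannot be closed for the lemma as literally stated. Your reduction $\left\{m\atop p\right\}\equiv\left\{m-1\atop p-1\right\}\pmod p$ is fine, but the case $i=p-1$ you then invoke was established only for \emph{positive} exponents: the explicit formula with the sum starting at $n=1$ is false at exponent $0$, where it yields $(-1)^{p}/(p-1)!$ rather than $\left\{0\atop p-1\right\}=0$. When $m=1$ the shifted exponent is $0$, and the congruence genuinely fails there: taking $m=1$, $m'=p$, $i=p$ one has $m\equiv m'\pmod{p-1}$, yet $\left\{1\atop p\right\}=0$ while $\left\{p\atop p\right\}=1$, so part (i) is false at this edge case and no argument can rescue it. The correct repair is to exclude $i=p$ or to add the hypothesis $m,m'\ge 2$ when $i=p$; with that restriction your recurrence argument goes through, and this restricted form is all the paper ever uses --- the only application with $i=j+1=p$ occurs in the proof of congruence \eqref{eq:congruence2}, where the exponents compared are $m+p$ and $m+1$ with $m\ge 1$, both at least $2$.
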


\begin{thm}
Let $k$, $m$, $m'$, $n$, and $n'$ be positive integers.
For any prime $p$, the following congruences hold:

\begin{enumerate}
\item If $ k \ge p $, then
\begin{equation}\label{eq:congruence1}
 D_k(m,n) \equiv 0 \pmod{p}. 
\end{equation}

\item If $m\equiv m' $ and $n\equiv n' \pmod{p-1} $,
then
\begin{equation}\label{eq:congruence2}
D_k(m,n) \equiv D_k(m',n') \pmod{p}.
\end{equation}

\item 
If $p>k$, then 
\begin{equation}\label{eq:congruence3}
D_k(p-1,n) \equiv
\begin{cases}
0 & ( n\not \equiv 0\pmod{p-1} ) \\
\displaystyle \frac{(-1)^{k-1}}{(k-1)!} &( n \equiv 0\pmod{p-1} ) 
\end{cases}
 \pmod{p}.
\end{equation}

\item 
\begin{equation}\label{eq:congruence4}
D_k(p,n) \equiv 
\begin{cases}
2^n-1 & (k=1)\\
0 & (k\ge 2)
\end{cases}
\pmod{p}.
\end{equation}

\end{enumerate}
\end{thm}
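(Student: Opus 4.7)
The plan is to apply Proposition \ref{prop:D_k_stirling} and reduce each summand in \eqref{eq:D_k_stirling} modulo $p$ using the three parts of Lemma \ref{lemma:stirlinglemma}. The recurring observation is that for $j \geq p$ the factor $(j!)^2$ contributes $p^2$ to the numerator, so many high-$j$ summands vanish mod $p$ whenever $k!$ lacks enough $p$-factors to cancel, while the surviving low-$j$ summands are handled by the periodicity of Stirling numbers mod $p-1$.

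Part (i) follows from the factorization $\frac{(j!)^2}{k!} = j! \cdot \binom{j}{k} \cdot (j-k)!$, which shows each term of \eqref{eq:D_k_stirling} equals $j!$ times an integer; when $k \geq p$ every summation index satisfies $j \geq k \geq p$, hence $p \mid j!$ and each term vanishes. Part (ii) reduces to (i) when $k \geq p$; otherwise $k!$ is a $p$-adic unit, so summands with $j \geq p$ still vanish (as $p^2 \mid (j!)^2$), and for $k \leq j \leq p-1$ both Stirling numbers $\left\{m+1 \atop j+1\right\}$ and $\left\{n+1 \atop j+1\right\}$ have upper index $j+1 \leq p$, so Lemma \ref{lemma:stirlinglemma}(i) allows us to replace $m, n$ by $m', n'$ term by term.

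For part (iii) I set $m = p-1$ in \eqref{eq:D_k_stirling}. The factor $\left\{p \atop j+1\right\}$ vanishes mod $p$ for $2 \leq j+1 \leq p-1$ by Lemma \ref{lemma:stirlinglemma}(ii), and the hypothesis $p > k$ ensures the only surviving index $j = p-1$ lies in the summation range. Its coefficient simplifies via Wilson ($((p-1)!)^2 \equiv 1 \pmod p$) together with the mod-$p$ evaluation $\binom{p-2}{k-1} \equiv (-2)(-3) \cdots (-k)/(k-1)! \equiv (-1)^{k-1}k \pmod p$, yielding $\frac{(-1)^{k-1}}{(k-1)!} \left\{n+1 \atop p\right\}$. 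Lemma \ref{lemma:stirlinglemma}(i) then reduces $\left\{n+1 \atop p\right\}$ to $\left\{r \atop p\right\}$ for a representative $r \equiv n+1 \pmod{p-1}$; this is zero unless $n+1 \equiv 1 \pmod{p-1}$, in which case taking $r = p$ gives $\left\{p \atop p\right\} = 1$.

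For part (iv) I set $m = p$, so Lemma \ref{lemma:stirlinglemma}(i) gives $\left\{p+1 \atop j+1\right\} \equiv \left\{2 \atop j+1\right\} \pmod p$ for $j+1 \leq p$, which is $1$ precisely for $j+1 \in \{1, 2\}$ and $0$ otherwise. The boundary case $j = p$ lies outside the direct scope of that lemma, but its term carries $(p!)^2/k!$, which has positive $p$-adic valuation when $k < p$ and is killed by part (i) when $k \geq p$. Hence only $j = 1$ can survive, which forces $k = 1$: the remaining factor is $\left\{n+1 \atop 2\right\} = 2^n - 1$ by Lemma \ref{lemma:stirlinglemma}(iii), while $k \geq 2$ leaves no contributing summand. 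The main technical obstacle throughout is the disciplined case-splitting at the boundary $j \approx p$, where Lemma \ref{lemma:stirlinglemma}(i) must be supplemented by a $p$-adic valuation argument on $(j!)^2/k!$ or by Wilson's theorem rather than invoked directly.
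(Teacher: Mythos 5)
Your proposal is correct and follows essentially the same route as the paper: expand $D_k(m,n)$ via Proposition \ref{prop:D_k_stirling} and reduce term by term with Lemma \ref{lemma:stirlinglemma}, killing the $j\ge p$ terms by the $p$-divisibility of $(j!)^2/k!$ and evaluating the boundary term $j=p-1$ (resp.\ $j=1$) by Wilson's theorem and $\binom{p-2}{k-1}\equiv(-1)^{k-1}k$. The only (cosmetic) differences are that in (ii)--(iv) the paper first normalizes the arguments (proving $D_k(m+p-1,n)\equiv D_k(m,n)$ and reducing to $1\le n\le p-1$, and using the Stirling recurrence for $\bigl\{{p+1\atop j+1}\bigr\}$), whereas you apply the mod-$(p-1)$ periodicity of Lemma \ref{lemma:stirlinglemma}(i) directly to $\bigl\{{n+1\atop p}\bigr\}$ and $\bigl\{{p+1\atop j+1}\bigr\}$; both versions rest on the same lemma and yield the same computations.
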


\begin{proof}
\begin{enumerate}
\item 
If $k\ge p$, 
then
$( j!)^2/k! \equiv 0 \pmod{p}$
in \eqref{eq:D_k_stirling}, 
and this proves that
$D_k(m,n) \equiv 0\pmod{p}$.

\item 
By (i), when $k\ge p$ both sides of 
\eqref{eq:congruence2} vanish modulo $p$
and the congruence holds.
We may assume that $p>k$.
By the symmetric property $D_k(m,n)
= D_k(n,m)$, 
we only have to show that
$D_k(m+p-1,n) \equiv D_k(m,n) \pmod{p}$.
By Proposition \ref{prop:D_k_stirling}, we have
\begin{align}\label{eq:D_k(m+p-1)}
D_k(m+p-1,n) 
&=
\sum_{j=k}^{\min(m+p-1,n)} \binom{j-1}{k-1}
\frac{(j!)^2}{k!} 
  \left\{m+p \atop j+1 \right\}
  \left\{n+1 \atop j+1 \right\}.
\end{align} 
The terms for $j\ge m+1$ in \eqref{eq:D_k(m+p-1)} vanish modulo $p$.
In fact, 
if $m+1 \le j  \le p-1$ then
$ \left\{m+p \atop j+1 \right\}  \equiv \left\{m+1 \atop j+1 \right\}\equiv 0
\pmod{p}$
by Lemma \ref{lemma:stirlinglemma} (i),
and if $j\ge p$ then $j! \equiv 0 \pmod{p}$.
Consequently, we have
\begin{align*}
D_k(m+p-1,n) 
&\equiv 
\sum_{j=k}^{\min(m,n)} \binom{j-1}{k-1}
\frac{(j!)^2}{k!} 
  \left\{m+1 \atop j+1 \right\}
  \left\{n+1 \atop j+1 \right\} \pmod{p},
\end{align*} 
and this is equal to $D_k(m,n)$.

\item 
By (ii), we only have to consider the cases
with $1\le n\le p-1$.
By Proposition \ref{prop:D_k_stirling}, we have
\begin{align}\label{eq:D_k(p-1,n)}
D_k(p-1,n) 
=
\sum_{j=k}^{\min(p-1,n)} \binom{j-1}{k-1}
\frac{(j!)^2}{k!} 
  \left\{p \atop j+1 \right\}
  \left\{n+1 \atop j+1 \right\}.
\end{align} 

If $n\le p-2$, then
$  \left\{p \atop j+1 \right\} \equiv 0 \pmod{p} $
by Lemma \ref{lemma:stirlinglemma} (ii), 
and $D_k(p-1,n)\equiv 0\pmod{p}$.
If $n=p-1$, then only the term for $j=p-1$ in 
\eqref{eq:D_k(p-1,n)} remains, and 
\[ D_k(p-1,n)\equiv 
\binom{p-2}{k-1}\frac{((p-1)!)^2}{k!}
\equiv 
\frac{(-1)^{k-1}}{(k-1)!}
\pmod{p} .\]
The final equivalence is derived from
the congruence $ \binom{p-2}{k-1} \equiv 
(-1)^{k-1} k$
and Wilson's theorem, which states that
$(p-1)! \equiv -1 \pmod{p}$.

\item
By Proposition \ref{prop:D_k_stirling}, we have
\begin{align}\label{eq:D_k(p,n)}
D_k(p,n) 
&=
\sum_{j=k}^{\min(p,n)} \binom{j-1}{k-1}
\frac{(j!)^2}{k!} 
  \left\{p+1 \atop j+1 \right\}
  \left\{n+1 \atop j+1 \right\}.
\end{align} 
When $j=p$, 
we have $(j!)^2/k! \equiv 0\pmod{p}$.
When $2 \le j \le p-1$,  we have
$  \left\{p+1 \atop j+1 \right\}
  = 
 \left\{p \atop j \right\}
 +(j+1) \left\{p \atop j+1 \right\}
\equiv 0 \pmod{p}$
because of Lemma \ref{lemma:stirlinglemma} (ii).
Therefore, the congruence $D_k(p,n)\equiv 
0 \pmod{p}$ holds 
for $k\ge 2$.

If $k=1$, then the term for $j=1$ in \eqref{eq:D_k(p,n)} remains, and 
$D_k(p,n) \equiv 
 \left\{p+1 \atop 2 \right\}
 \left\{n+1 \atop 2 \right\}
= (2^{p}-1)(2^n-1)
\equiv 2^n-1 \pmod{p}$
by Lemma \ref{lemma:stirlinglemma} (iii) and Fermat's little theorem.
\end{enumerate}
\end{proof}

\begin{table}[h]
  \caption{$D_1(m,n)$}
  \label{D1}
  \centering
  \begin{tabular}{c|cccccc}
$m\backslash n$ & 0 & 1 & 2 & 3 & 4 & 5  \\ \hline
0 & 0 & 0 & 0 & 0 & 0 & 0  \\
1 & 0 & 1 & 3 & 7 & 15 & 31  \\
2 & 0 & 3 & 13 & 45 & 145 & 453  \\
3 & 0 & 7 & 45 & 229 & 1065 & 4717  \\
4 & 0 & 15 & 145 & 1065 & 6901 & 41505  \\
5 & 0 & 31 & 453 & 4717 & 41505 & 329461 \\
   \end{tabular}
\end{table}

\begin{table}[h]
  \caption{$D_2(m,n)$}
  \label{D2}
  \centering
  \begin{tabular}{c|cccccc}
$m\backslash n$ & 0 & 1 & 2 & 3 & 4 & 5  \\ \hline
0 & 0 & 0 & 0 & 0 & 0 & 0  \\
1 & 0 & 0 & 0 & 0 & 0 & 0  \\
2 & 0 & 0 & 2 & 12 & 50 & 180  \\
3 & 0 & 0 & 12 & 108 & 660 & 3420  \\
4 & 0 & 0 & 50 & 660& 5714 & 40860 \\
5 & 0 & 0 & 180 & 3420& 40860 & 391500  \\
   \end{tabular}
\end{table}

\begin{table}[h]
	\caption{$D(m,n)$}
	\label{D}
	\centering
	\begin{tabular}{c|cccccc}
		$m\backslash n$ & 0 & 1 & 2 & 3 & 4 & 5  \\ \hline
		0 & 1 & 1 & 1 & 1 & 1 & 1  \\
		1 & 1 & 2 & 4 & 8 & 16 & 32  \\
		2 & 1 & 4 & 16 & 58 & 196 & 634  \\
		3 & 1 & 8 & 58 & 344 & 1786 & 8528  \\
		4 & 1 & 16 & 196 & 1786& 13528 & 90946  \\
		5 & 1 & 32 & 634 & 8528& 90446 & 833432  \\
	\end{tabular}
\end{table}

\vspace{10pt}

\noindent \textbf{Acknowledgements}

\noindent This work was supported by Grant-in-Aid for Young Scientists (B)
from JSPS KAKENHI (16K17583).

\vspace{10pt}

\noindent \textbf{Address}: Department of Mathematics, Osaka Institute of Technology\\
5-16-1, Omiya, Asahi, Osaka 535-8585, Japan\\

\noindent \textbf{E-mail}: ken.kamano@oit.ac.jp

\end{document}